\theoremstyle{definition}
\newtheorem{defi}{Definition}[section]
\newtheorem{assumption}[defi]{Assumption}
\theoremstyle{remark}
\newtheorem{rem}[defi]{Remark}
\theoremstyle{plain}
\newtheorem{lemma}[defi]{Lemma}
\newtheorem{coro}[defi]{Corollary}
\newtheorem{teo}[defi]{Theorem}
\newtheorem{prop}[defi]{Proposition}
\newcommand{\Aut}{\mathrm{Aut}}
\newcommand{\Hom}{\mathrm{Hom}}
\DeclareMathOperator{\GL}{GL}
\newcommand{\M}{\mathrm{M}}
\newcommand{\ad}{\mathrm{ad}}
\newcommand{\Q}{\mathbb Q}
\newcommand{\Z}{\mathbb Z}
\newcommand{\F}{\mathbb F}
\newcommand{\fr}{\mathfrak}
\newcommand{\cl }{\mathcal}
\begin{document}

\title[An irreducibility criterion for group representations]{An irreducibility criterion for group representations, with arithmetic applications}

\author{Matteo Longo and Stefano Vigni}

\address{M. L.: Dipartimento di Matematica Pura e Applicata, Università di Padova, Via Trieste 63, 35121 Padova, Italy}
\email{mlongo@math.unipd.it}
\address{S. V.: Departament de Matemàtica Aplicada II, Universitat Politècnica de Catalunya, C. Jordi Girona 1-3, 08034 Barcelona, Spain}
\email{stefano.vigni@upc.edu}

\subjclass[2000]{20C12, 11F80}
\keywords{Group representations, noetherian domains, reductions modulo primes}

\begin{abstract}
We prove a criterion for the irreducibility of an integral group representation $\rho$ over the fraction field of a noetherian domain $R$ in terms of suitably defined reductions of $\rho$ at prime ideals of $R$. As applications, we give irreducibility results for universal deformations of residual representations, with a special attention to universal deformations of residual Galois representations associated with modular forms of weight at least $2$.
\end{abstract}

\maketitle

\section{Introduction}

Let $G$ be a group, let $R$ be a noetherian integral domain with fraction field $K$ and consider a representation
\[ \rho_K:G\longrightarrow\GL_d(K) \]
of $G$ over $K$. A classical problem in representation theory is to find suitable conditions under which $\rho_K$ is integral over $R$, i.e., under which there exists a representation
\[ \rho_R:G\longrightarrow\GL_d(R) \]
of $G$ over $R$ which is equivalent (i.e., conjugated) to $\rho_K$. If this is the case, one says that $\rho_K$ can be \emph{realized over} $R$ via $\rho_R$ (see \S \ref{terminology-subsection} for more precise definitions). At least when the group $G$ is finite and the ring $R$ is a Dedekind domain (which we do not require), dealing with this and related questions amounts to studying $R$-orders and lattices in non-commutative $K$-algebras (see, e.g., \cite[Ch. 3]{cr}). Here we propose to tackle a different problem: we assume that $\rho_K$ can be realized over $R$ via a representation $\rho_R$ as above and we look for properties of $\rho_R$ which guarantee that $\rho_K$ is irreducible. 

More precisely, in \S \ref{reductions-subsec} we define reductions $\bar\rho_\fr p$ of $\rho_R$ at prime ideals $\fr p$ of $R$; these are representations of $G$ over the residue fields of the localizations of $R$ at the primes $\fr p$, and our goal is to relate the irreducibility of the $\bar\rho_\fr p$ to that of $\rho_K$. As a motivation, consider the toy case in which $R$ is a discrete valuation ring with maximal ideal $\fr m$: in this situation it is easy to see (essentially by applying Nakayama's lemma) that $\rho_K$ is irreducible if $\bar\rho_\fr m$ is. 

Of course, in the general setting where $R$ is allowed to be an arbitrary noetherian domain one expects extra complications to arise. Nevertheless, the main result of this note (Theorem \ref{thm1}) shows that a criterion of this sort is still valid; in fact, we can prove 

\begin{teo}
If $\bar\rho_\fr p$ is irreducible for a set of prime ideals $\fr p$ of $R$ with trivial intersection then $\rho_K$ is irreducible. 
\end{teo}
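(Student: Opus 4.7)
The plan is to argue by contradiction. Suppose $\rho_K$ is reducible, so fix a proper nonzero $G$-stable $K$-subspace $W\subset K^d$. From $W$ I will build a $G$-stable $R$-submodule of $R^d$ whose reduction modulo every prime $\fr p$ in the given family is a proper $G$-stable subspace of $\bar\rho_\fr p$; the irreducibility of $\bar\rho_\fr p$ will force each such reduction to be zero, and the trivial intersection of the $\fr p$'s will contradict the nonvanishing of my module.

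Concretely, set $M:=W\cap R^d$, viewing $R^d$ as the lattice on which $\rho_R$ acts. Clearing denominators in a nonzero vector of $W$ shows that $M\neq 0$, while $K\cdot M\subset W\subsetneq K^d$ forces $M\neq R^d$. Since $W$ and $R^d$ are both $G$-stable, so is $M$. The key property is that $N:=R^d/M$ is torsion-free over $R$: if $rx\in M$ with $r\in R\setminus\{0\}$ and $x\in R^d$, then $rx\in W$, and dividing by $r$ in the $K$-vector space $W$ gives $x\in W$, hence $x\in M$. Being nonzero and torsion-free over the domain $R$, the module $N$ has zero annihilator; being finitely generated, its support is therefore all of $\Spec R$, so $N\otimes_R\kappa(\fr p)\neq 0$ for every prime $\fr p$ of $R$, where $\kappa(\fr p)$ denotes the residue field of $R_\fr p$.

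Now, for each prime $\fr p$, tensor the exact sequence $0\to M\to R^d\to N\to 0$ over $R$ with $\kappa(\fr p)$. The image of $M$ in $\kappa(\fr p)^d$ is a $G$-stable $\kappa(\fr p)$-subspace whose cokernel $N\otimes_R\kappa(\fr p)$ is nonzero by the support calculation above, so this image is a \emph{proper} $G$-stable subspace of $\bar\rho_\fr p$. Assuming $\bar\rho_\fr p$ is irreducible, this image must be zero, and since $R$ is a domain (so that $\fr p R_\fr p\cap R=\fr p$) this vanishing is equivalent to the inclusion $M\subset\fr p R^d$. Letting $\fr p$ range over the given family and invoking the identity $\bigcap_\fr p(\fr p R^d)=\bigl(\bigcap_\fr p\fr p\bigr)R^d$, which is immediate since $R^d$ is a free $R$-module, the trivial intersection hypothesis yields $M=0$, contradicting $M\neq 0$.

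The main conceptual step is the construction of the $G$-stable lattice $M=W\cap R^d$ together with the observation that $R^d/M$ is torsion-free; the latter is precisely what guarantees that a nonzero reduction of $M$ is automatically proper, and thus what makes the contradiction with irreducibility work. The remaining arguments about supports and intersections of primes are formal and require nothing beyond the noetherian domain hypothesis; in particular, no assumption like Dedekind or normal on $R$ is needed.
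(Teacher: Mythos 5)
Your proof is correct and follows essentially the same route as the paper: intersect $W$ with the lattice $R^d$ to get $M$, show by a Nakayama-type argument (your torsion-free/support formulation of the properness of the reduction is equivalent to the paper's localization-plus-Nakayama step) that irreducibility of each $\bar\rho_{\fr p}$ forces $M\subset\fr p R^d$, and conclude from $\bigcap_{\fr p}\fr p R^d=\bigl(\bigcap_{\fr p}\fr p\bigr)R^d=\{0\}$. The differences are only in packaging: the paper proves the identity on intersections tensored with a finite-rank free module as a separate proposition and passes through a free submodule of $M$ of full rank in $W$, whereas you verify both points coordinatewise, which suffices since the lattice is $R^d$.
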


It is worthwhile to remark that the group $G$ is \emph{arbitrary} (in particular, it need not be finite). This result has a number of consequences (cf. \S \ref{main-subsection} and \S \ref{regular-subsec}). As an example, in Theorem \ref{thm2} we extend the implication recalled above for discrete valuation rings to a much larger class of local domains; namely, we prove that if $R$ is a regular local ring with maximal ideal $\fr m$ then $\rho_K$ is irreducible if $\bar\rho_\fr m$ is.  

Section \ref{arithmetic-section} ends the paper with applications of our algebraic results to arithmetic contexts. In particular, in \S \ref{universal-subsec} we deal with deformations of residual representations in the sense of Mazur (\cite{mazur}) and prove that an irreducible residual representation admits irreducible universal deformations when the deformation problem for it is unobstructed (see Proposition \ref{irreducible-def-prop} for an accurate statement). Finally, in \S \ref{modular-subsec} we specialize this irreducibility result to the case of residual modular Galois representations. In this setting, using results of Weston (\cite{weston}), we show that if $f$ is a newform and $K_f$ is the number field generated by its Fourier coefficients then for infinitely many (in a strong sense) primes $\lambda$ of $K_f$ the residual Galois representation $\bar\rho_{f,\lambda}$ attached by Deligne to $f$ and $\lambda$ has irreducible universal deformation (see Proposition \ref{modular-prop} for a precise formulation).

\vskip 2mm

\noindent\emph{Acknowledgements.} We would like to thank Luis Dieulefait and Marco Seveso for helpful discussions and comments. The second author acknowledges the warm hospitality of the Centre de Recerca Matemàtica (Bellaterra, Spain) during Autumn 2009, when this work was completed. 

\section{Representations over lattices and reductions}

In this section we review the basic definitions concerning integral group representations and their reductions modulo prime ideals.

\subsection{Terminology and auxiliary results} \label{terminology-subsection}

As in the introduction, let $R$ be a noetherian domain with fraction field $K$ and let $G$ be a group. Let $V$ be a vector space over $K$ of dimension $d\geq1$ and let
\begin{equation} \label{def-rho}
\rho_V:G\longrightarrow\Aut_K(V)\simeq\GL_d(K)
\end{equation}
be a representation of $G$ in the $K$-vector space of $K$-linear automorphisms of $V$. If $L$ is an $R$-submodule of $V$ write $KL$ for the $K$-subspace of $V$ generated by $L$.

\begin{defi}
An \emph{$R$-lattice} (or simply a \emph{lattice}) of $V$ is a finitely generated $R$-submodule $L$ of $V$ such that $KL=V$.
\end{defi}

In other words, the lattice $L$ is a finitely generated $R$-submodule of $V$ which contains a basis of $V$ over $K$.

\begin{rem} \label{lattice-rem}
1) It can be shown (see \cite[Ch. VII, \S 4.1]{b}) that if $L$ is an $R$-lattice of $V$ then there exists a free $R$-submodule of $L$ of rank $d$.

2) If the group $G$ is finite then $G$-stable lattices in $V$ always exist. However, such lattices are not necessarily free over $R$ (see, e.g., \cite[p. 550]{serre}). 
\end{rem}

If the lattice $L$ is stable for the action of $G$ (i.e., is a left $R[G]$-module) then we can define a representation
\[ \rho_L:G\longrightarrow\Aut_R(L) \]
of $G$ in the $R$-module of $R$-linear automorphisms of $L$. More generally, if $A$ is any $R$-algebra (with trivial $G$-action) we can consider the representation
\[ \rho_{L,A}:G\longrightarrow\Aut_A(L\otimes_RA) \]
of $G$ in the $A$-module of $A$-linear automorphisms of $L\otimes_RA$ which is obtained by extending $\rho_L$ by $A$-linearity.

Now we can give the following important

\begin{defi}
The representation $\rho_V$ is \emph{integral over $R$} if there exists a $G$-stable lattice $L$ of $V$. If this is the case then the isomorphism $L\otimes_RK\simeq V$ is $G$-equivariant and we say that $V$ can be \emph{realized over} $L$.
\end{defi}

Notice that, in light of part 1) of Remark \ref{lattice-rem}, assuming the lattice $L$ to be free over $R$ is not too serious a restriction; in fact, in order not to burden our exposition with unenlightening technicalities, the main results of this paper will be proved under this condition.\\

To avoid ambiguities, we recall some standard terminology. The representation $\rho_V$ is said to be \emph{irreducible} if the only $K$-subspaces of $V$ which are invariant for $G$ under $\rho_V$ are $\{0\}$ and $V$; if $\rho_V$ is understood, we also say that the left $K[G]$-module $V$ is \emph{irreducible}. More generally, a left $R[G]$-module $L$ which is finitely generated over $R$ is said to be \emph{irreducible} if $L$ does not contain any (necessarily finitely generated, since $R$ is noetherian) $R$-submodule $M$ which is $G$-stable and such that $KM$ is neither trivial nor equal to $KL$; in this case, we also say that the representation $\rho_L$ defined as above is \emph{irreducible}.

\begin{prop} \label{prop1}
With notation as before, let $\rho_V$ be realized over $L$. Then $\rho_V$ is irreducible if and only if $\rho_L$ is.
\end{prop}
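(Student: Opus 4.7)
The plan is to prove the two implications separately, exploiting the fact that $L$ is a lattice of $V$ to move freely between $R$-submodules of $L$ and $K$-subspaces of $V$ via the operations $M \mapsto KM$ and $W \mapsto W \cap L$.

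First I would handle the easier implication: if $\rho_V$ is reducible, then $\rho_L$ is reducible. Suppose $M \subseteq L$ is a $G$-stable $R$-submodule such that $KM$ is neither $\{0\}$ nor $KL = V$. Then $KM$ is a proper non-trivial $K$-subspace of $V$, and since $M$ is $G$-stable, so is $KM$ (the $G$-action on $V$ is $K$-linear and extends the one on $L$). This contradicts the irreducibility of $\rho_V$.

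For the converse, suppose $\rho_V$ is reducible, say $W \subseteq V$ is a $G$-stable $K$-subspace with $W \neq \{0\}$ and $W \neq V$. Set $M := W \cap L$. This is an $R$-submodule of $L$, stable under $G$ since both $W$ and $L$ are, and it is finitely generated because $R$ is noetherian and $L$ is finitely generated over $R$. The key step is to check that $KM = W$. The inclusion $KM \subseteq W$ is immediate. For the reverse inclusion, pick $w \in W$. Since $W \subseteq V = KL$ and $L$ is finitely generated, a standard clearing-of-denominators argument (writing $w$ as a $K$-linear combination of elements of a finite generating set of $L$ and multiplying by a common denominator in $R$) produces a non-zero $r \in R$ with $rw \in L$. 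Then $rw \in W \cap L = M$, so $w = r^{-1}(rw) \in KM$. Hence $KM = W$, which is neither $\{0\}$ nor $V$, so $\rho_L$ is reducible by definition.

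The only real subtlety is ensuring that for every $w \in W$ some non-zero multiple lies in $L$; this uses nothing beyond the lattice condition $KL = V$ and the finite generation of $L$, so I expect no serious obstacles. The noetherian hypothesis on $R$ is used, as noted in the definition of irreducibility, to guarantee finite generation of the offending submodule $M$ without extra work.
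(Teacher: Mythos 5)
Your proof is correct and follows essentially the same route as the paper: one direction by passing from a $G$-stable submodule $M\subseteq L$ to the $K[G]$-subspace $KM$, the other by intersecting a proper nontrivial $K[G]$-subspace $W$ with $L$ and verifying $K(L\cap W)=W$ (you spell out the denominator-clearing step that the paper compresses into the identity $KM=(KL)\cap W$). The only blemish is the mislabelled first implication (``if $\rho_V$ is reducible, then $\rho_L$ is reducible''), since the argument you actually give proves that reducibility of $\rho_L$ contradicts irreducibility of $\rho_V$, i.e.\ the correct direction.
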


\begin{proof} Assume first that $\rho_V$ is irreducible and let $M$ be a finitely generated $R$-submodule of $V$ which is $G$-stable. The $K$-subspace $KM$ is then a $K[G]$-submodule of $V$, so it must be either trivial or equal to $V$, hence $\rho_L$ is irreducible. Conversely, suppose that $\rho_L$ is irreducible. If $\rho_V$ were not irreducible then we could find a $K[G]$-submodule $W$ of $V$ such that $W\not=\{0\}$ and $W\not=V$. Set $M:=L\cap W$. Then $M$ is a finitely generated $R$-submodule of $L$ (because $R$ is noetherian) which is $G$-stable and such that
\[ KM=(KL)\cap W=V\cap W=W, \]
which contradicts the irreducibility of $\rho_L$. \end{proof}

Proposition \ref{prop1} makes it possible to study the irreducibility of $\rho_V$ in terms of that of $\rho_L$, and this will be the underlying theme of the rest of the paper.

\subsection{Reductions modulo prime ideals} \label{reductions-subsec}

From now on let $\rho_V$ be a representation of the group $G$ which is realized over the lattice $L$. For any prime ideal $\fr p$ of $R$ write $R_\fr p$ for the localization of $R$ at $\fr p$ and let
\[ k_\fr p:=R_\fr p/\fr pR_\fr p,\qquad\pi_\fr p:R_\fr p\longrightarrow k_\fr p \]
be the residue field of $R_\fr p$ and the canonical quotient map, respectively. Define $L_\fr p:=R_{\fr p}L$ (as submodule of $V$) and set $\rho_{L,\fr p}:=\rho_{L,R_\fr p}$, for short. Observe that $L_\fr p$ is the localization of $L$ at $\fr p$, as suggested by the notation, so that there is a canonical isomorphism $L_{\fr p}\simeq L\otimes_RR_{\fr p}$. Moreover, $L_\fr p$ is an $R_\fr p$-lattice of $V$. We also have a residual representation $\bar\rho_{L,\fr p}$ on the field $k_\fr p$ which is defined as the composition
\[ \bar\rho_{L,\fr p}:G\xrightarrow{\rho_{L,\fr p}}\Aut_{R_\fr p}(L_\fr p)\overset{\pi_\fr p}{\longrightarrow}\Aut_{k_\fr p}(L_\fr
p/\fr pL_\fr p). \]
In particular, taking as $\fr p$ the trivial ideal $(0)$ of $R$ gives $\rho_{L,(0)}=\bar\rho_{L,(0)}=\rho_V$. The notion of irreducibility for the representations $\bar\rho_{L,\fr p}$ is the obvious one.

To motivate the main theorem of this note, in the next subsection we recall a classical result over discrete valuation rings.

\subsection{Discrete valuation rings}

Suppose now that $\cl O$ is a discrete valuation ring and fix a generator $\pi$ of its maximal ideal $\wp$. Let $F$ denote the fraction field of $\cl O$. In this case, since $\cl O$ is a principal ideal domain, every $\cl O$-lattice $L$ in a finite-dimensional $F$-vector space $V$ is free over $\cl O$. Of course, $\cl O_\wp=\cl O$, hence $L_\wp=L$ and the reduced representation $\bar\rho_{L,\wp}$ is equal to the composition
\[ \bar\rho_{L,\wp}:G\overset{\rho_L}{\longrightarrow}\Aut_\cl O(L)\overset{\pi_\wp}{\longrightarrow}\Aut_{k_\wp}(L/\wp L) \]
where $k_\wp:=\cl O/\wp$ is the residue field of $\cl O$. The result we are about to state is well known, but we recall it here because it represents a motivation for the theorem that will be proved in the subsequent section.

\begin{prop} \label{dvr-prop}
Let $\cl O$, $\rho_V$ and $\rho_L$ be as above. If $\bar\rho_{L,\wp}$ is irreducible then $\rho_V$ is irreducible.
\end{prop}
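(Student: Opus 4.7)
The plan is to combine Proposition \ref{prop1} with a direct flatness-plus-Nakayama argument at the residue field. First I would invoke Proposition \ref{prop1}: since $\rho_V$ is realized over $L$, it is irreducible if and only if $\rho_L$ is, so it suffices to prove the irreducibility of $\rho_L$. Suppose, by contradiction, that $\rho_L$ is reducible. Then there is a finitely generated, $G$-stable $\cl O$-submodule $M$ of $L$ such that $W := FM$ is a proper non-zero $F[G]$-subspace of $V$. Replacing $M$ with $L \cap W$ (still finitely generated because $\cl O$ is noetherian, still $G$-stable, and with $F(L\cap W)=W$) I may assume $M$ is saturated in $L$, so that $L/M$ is torsion-free.

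Since $\cl O$ is a discrete valuation ring, finitely generated torsion-free $\cl O$-modules are free; hence both $M$ and $L/M$ are free over $\cl O$, and their ranks are strictly positive because $W \neq 0$ and $V/W \neq 0$. The short exact sequence of $\cl O[G]$-modules
\[ 0 \longrightarrow M \longrightarrow L \longrightarrow L/M \longrightarrow 0 \]
has flat right-hand term, so tensoring over $\cl O$ with $k_\wp$ preserves exactness. This yields a short exact sequence of $k_\wp[G]$-modules
\[ 0 \longrightarrow M/\wp M \longrightarrow L/\wp L \longrightarrow (L/M)/\wp(L/M) \longrightarrow 0 \]
in which both outer terms are non-zero, by Nakayama's lemma applied to the non-zero free $\cl O$-modules $M$ and $L/M$. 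Hence $M/\wp M$ embeds in $L/\wp L$ as a proper, non-trivial $G$-stable $k_\wp$-subspace, contradicting the irreducibility of $\bar\rho_{L,\wp}$.

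The main subtlety, though mild, is the saturation step: without first passing from $M$ to $L \cap W$, the quotient $L/M$ need not be torsion-free, so the displayed sequence may fail to remain exact after reduction modulo $\wp$, and $M/\wp M$ might not inject into $L/\wp L$. Once saturation secures the freeness of $L/M$, the remainder of the proof is a transparent application of Nakayama's lemma; the argument also makes clear why the DVR hypothesis is essential, since it is exactly what guarantees that finitely generated torsion-free $\cl O$-modules are free.
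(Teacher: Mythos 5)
Your proof is correct, and it takes a mildly but genuinely different route from the paper's. The paper also reduces to $\rho_L$ via Proposition \ref{prop1} and argues by contradiction with a $G$-stable submodule $M$, but instead of saturating it rescales: one picks $n$ with $M\subset\wp^nL$, $M\not\subset\wp^{n+1}L$ (possible since $\bigcap_n\wp^nL=\{0\}$) and replaces $M$ by $\pi^{-n}M$, so that $M\not\subset\wp L$; then a single application of Nakayama's lemma ($M\neq L$ forces $M+\wp L\neq L$) shows the image of $M$ in $L/\wp L$ is a non-zero proper $G$-stable subspace, contradicting the irreducibility of $\bar\rho_{L,\wp}$. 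You instead replace $M$ by the saturation $L\cap W$, invoke the structure theory of finitely generated modules over a DVR to get $M$ and $L/M$ free of positive rank, and use flatness of $L/M$ to keep the sequence exact after reduction, so that $M/\wp M$ injects into $L/\wp L$ with non-zero cokernel. Both arguments are complete and elementary, and both are genuinely tied to the DVR hypothesis (yours through ``finitely generated torsion-free implies free'', the paper's through the uniformizer and the Krull-type intersection $\bigcap_n\wp^nL=\{0\}$). Your version yields slightly more structural information (the reduced submodule is literally $M/\wp M$, injected into $L/\wp L$), at the cost of the saturation step and the appeal to the structure theorem; the paper's rescaling trick is shorter and only needs the non-vanishing of the image of $M$ in $L/\wp L$, not injectivity. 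One small remark: in your last step Nakayama is not really needed, since a non-zero free module trivially has non-zero reduction modulo $\wp$.
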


\begin{proof} By Proposition \ref{prop1}, we can equivalently prove that $\rho_L$ is irreducible. Arguing by contradiction, suppose that $M$ is a $G$-stable $\cl O$-submodule of $L$ such that $KM$ is neither trivial nor equal to $V$; in particular, $M\not=L$. It is easy to see that there exists $n\in\mathbb N$ such that
\[ M\subset\wp^nL,\qquad M\not\subset\wp^{n+1}L. \]
Hence, at the cost of multiplying $M$ by a suitable power of $\pi$, we can assume that $M\not\subset\wp L$. Since $M\not=L$, Nakayama's lemma ensures that $M+\wp L\not=L$ (see, e.g., \cite[Corollary 2.7]{am}), hence the image of $M$ in $L/\wp L$ is non-trivial and strictly smaller than $L/\wp L$. This contradicts the irreducibility of $\bar\rho_{L,\wp}$. \end{proof}

The question of finding an analogue of Proposition \ref{dvr-prop} when the discrete valuation ring $\cl O$ is replaced by an arbitrary noetherian domain has been the starting point of our investigation. Theorem \ref{thm1} gives a reasonable answer to this problem for a large class of rings.

\section{The irreducibility theorem}

This section is devoted to the proof of the main result of this paper, which is given in \S \ref{main-subsection}.

\subsection{Algebraic preliminaries}

For lack of an explicit reference, we give a proof of the following elementary result in linear algebra.

\begin{lemma} \label{tensor-lemma}
Let $A$ be a ring, let $M$ be a free $A$-module of finite rank and let ${\{N_\lambda\}}_{\lambda\in\Lambda}$ be a family of $A$-modules. There is a canonical isomorphism
\[ \bigg(\prod_{\lambda\in\Lambda}N_\lambda\bigg)\otimes_AM\simeq\prod_{\lambda\in\Lambda}(N_\lambda\otimes_AM) \]
of $A$-modules.
\end{lemma}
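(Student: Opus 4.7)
The plan is to exhibit a canonical $A$-linear map between the two modules and then verify that it is an isomorphism by reducing to the case $M \simeq A^n$. For every $\mu \in \Lambda$ the canonical projection $p_\mu : \prod_\lambda N_\lambda \to N_\mu$ induces an $A$-linear map $p_\mu \otimes \mathrm{id}_M : (\prod_\lambda N_\lambda) \otimes_A M \to N_\mu \otimes_A M$, and by the universal property of the direct product these assemble into a single canonical $A$-linear map
\[ \varphi : \bigg(\prod_{\lambda\in\Lambda}N_\lambda\bigg) \otimes_A M \longrightarrow \prod_{\lambda\in\Lambda}(N_\lambda \otimes_A M), \]
which on elementary tensors sends $(n_\lambda)_\lambda \otimes m$ to $(n_\lambda \otimes m)_\lambda$. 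Since $\varphi$ is built purely from the universal property of products, no choice of basis enters its definition, so once shown to be bijective the resulting isomorphism is canonical in the sense stated.

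Next I would show that $\varphi$ is bijective. Choosing a basis $e_1,\dots,e_n$ of $M$ gives an isomorphism $M \simeq A^n$. Combining the natural identifications $N \otimes_A A \simeq N$ (valid for every $A$-module $N$) with the distributivity of $\otimes_A$ over finite direct sums yields
\[ \bigg(\prod_{\lambda}N_\lambda\bigg) \otimes_A A^n \;\simeq\; \bigg(\prod_{\lambda}N_\lambda\bigg)^{\!n}, \qquad \prod_{\lambda}(N_\lambda \otimes_A A^n) \;\simeq\; \prod_{\lambda} N_\lambda^{\,n}. \]
Moreover, finite and arbitrary direct products of modules commute, i.e., $\bigl(\prod_\lambda N_\lambda\bigr)^n \simeq \prod_\lambda N_\lambda^{\,n}$ tautologically by a second application of the universal property of the product. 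Tracing elementary tensors through the resulting chain of identifications shows that the composite agrees with $\varphi$, so $\varphi$ is an isomorphism.

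The only subtle point — and precisely the reason why the hypothesis that $M$ is free of finite rank cannot simply be dropped — is that for an infinite index set $\Lambda$, tensor product does not commute with $\prod_\lambda$ for an arbitrary module $M$. The whole argument rests on the coincidence of finite direct sums and finite direct products of $A$-modules, which lets one pull the finitely many summands of $M$ past the infinite product. I do not expect any genuine obstacle; once one commits to reducing via a basis, the rest is a brief diagram chase on universal properties, with the sole item to double-check being the compatibility of the basis-dependent chain of identifications with the basis-free $\varphi$ constructed at the start.
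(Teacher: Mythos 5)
Your proof is correct and follows essentially the same route as the paper: choose a basis of $M$, use distributivity of $\otimes_A$ over the finite direct sum, and conclude from the fact that finite direct sums commute with arbitrary products. The only (welcome) refinement is that you first define the basis-free map $\varphi$ via the universal property of the product and then verify bijectivity after choosing a basis, which makes the claimed canonicity explicit, whereas the paper simply splices together two chains of basis-dependent identifications.
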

\begin{proof} Let $\{e_1,\dots,e_t\}$ be a basis of $M$ over $A$, so that we can write
\[ M=\bigoplus_{i=1}^tAe_i. \]
There are canonical isomorphisms of $A$-modules
\begin{equation} \label{eq1}
\bigg(\prod_{\lambda\in\Lambda}N_\lambda\bigg)\otimes_AM\simeq\bigoplus_{i=1}^t\bigg[\bigg(\prod_{\lambda\in\Lambda}N_\lambda\bigg)\otimes_AAe_i\bigg]\simeq\bigoplus_{i=1}^t\bigg(\prod_{\lambda\in\Lambda}N_\lambda\bigg)e_i
\end{equation}
and
\begin{equation} \label{eq2}
\prod_{\lambda\in\Lambda}(N_\lambda\otimes_AM)\simeq\prod_{\lambda\in\Lambda}\bigg[\bigoplus_{i=1}^t\bigl(N_\lambda\otimes_AAe_i\bigr)\bigg]\simeq\prod_{\lambda\in\Lambda}\bigg(\bigoplus_{i=1}^tN_\lambda e_i\bigg).
\end{equation}
But there is also a canonical isomorphism of $A$-modules
\[ \begin{array}{ccc}
   \displaystyle{\bigoplus_{i=1}^t\bigg(\prod_{\lambda\in\Lambda}N_\lambda\bigg)e_i} & \overset{\simeq}{\longrightarrow} & \displaystyle{\prod_{\lambda\in\Lambda}\bigg(\bigoplus_{i=1}^tN_\lambda e_i\bigg)}\\[6mm]
   \bigl({(n_{1,\lambda})}_\lambda e_1,\dots,{(n_{t,\lambda})}_\lambda e_t\bigr) & \longmapsto & \bigl((n_{1,\lambda}e_1,\dots,n_{t,\lambda}e_t)\bigr)_{\lambda\in\Lambda},
   \end{array} \]
and the claim follows by combining \eqref{eq1} and \eqref{eq2}. \end{proof}

We use this result to prove

\begin{prop} \label{intersection-tensor-prop}
Let $A$ be a ring, let $M$ be a free $A$-module of finite rank, let $N$ be an $A$-module and let ${\{N_\lambda\}}_{\lambda\in\Lambda}$ be a family of $A$-submodules of $N$. There is a canonical isomorphism
\begin{equation} \label{int-tensor-eq}
\bigg(\bigcap_{\lambda\in\Lambda}N_\lambda\bigg)\otimes_AM\simeq\bigcap_{\lambda\in\Lambda}(N_\lambda\otimes_AM)
\end{equation}
of $A$-submodules of $N\otimes_AM$.
\end{prop}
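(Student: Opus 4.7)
The plan is to realize both sides of \eqref{int-tensor-eq} as the kernel of one and the same map obtained by tensoring a natural exact sequence with $M$, and then exploit both the flatness of $M$ and the commutation with products given by Lemma \ref{tensor-lemma}.

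First I would observe that, by definition, $\bigcap_\lambda N_\lambda$ is the kernel of the natural $A$-linear map
\[ \phi:N\longrightarrow\prod_{\lambda\in\Lambda}(N/N_\lambda),\qquad n\longmapsto(n+N_\lambda)_{\lambda\in\Lambda}, \]
so there is an exact sequence $0\to\bigcap_\lambda N_\lambda\to N\xrightarrow{\phi}\prod_\lambda(N/N_\lambda)$. Since $M$ is free of finite rank, it is flat over $A$, and tensoring with $M$ preserves the exactness of this sequence. Combining this with the canonical isomorphism supplied by Lemma \ref{tensor-lemma}, we obtain an exact sequence
\[ 0\longrightarrow\bigg(\bigcap_{\lambda\in\Lambda}N_\lambda\bigg)\otimes_AM\longrightarrow N\otimes_AM\xrightarrow{\;\psi\;}\prod_{\lambda\in\Lambda}\bigl((N/N_\lambda)\otimes_AM\bigr), \]
where $\psi$ is the product of the maps induced by the quotients $N\to N/N_\lambda$.

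Next, I would use flatness of $M$ once more to identify $N_\lambda\otimes_AM$ with its image inside $N\otimes_AM$ and to identify the canonical map $(N/N_\lambda)\otimes_AM\xrightarrow{\sim}(N\otimes_AM)/(N_\lambda\otimes_AM)$. Under these canonical identifications, $\psi$ becomes the natural map
\[ N\otimes_AM\longrightarrow\prod_{\lambda\in\Lambda}\bigl((N\otimes_AM)/(N_\lambda\otimes_AM)\bigr), \]
whose kernel is, by the very same argument applied to the family $\{N_\lambda\otimes_AM\}_\lambda$ inside $N\otimes_AM$, exactly $\bigcap_\lambda(N_\lambda\otimes_AM)$. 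Comparing this with the preceding exact sequence yields the desired canonical isomorphism.

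There is no real obstacle here: the only points requiring care are the naturality and compatibility of all the identifications involved, and the fact that the freeness (hence flatness) of $M$ is needed both to make the submodules $N_\lambda\otimes_AM$ inject into $N\otimes_AM$ (so that their intersection makes sense inside $N\otimes_AM$) and to preserve the left exactness of the starting sequence. The commutation of $\otimes_AM$ with the (potentially infinite) product $\prod_\lambda(N/N_\lambda)$, which is the one step that genuinely uses the finite rank assumption on $M$, is precisely what Lemma \ref{tensor-lemma} provides.
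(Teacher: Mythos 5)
Your proposal is correct and follows essentially the same route as the paper's own proof: tensor the exact sequence $0\to\bigcap_\lambda N_\lambda\to N\to\prod_\lambda(N/N_\lambda)$ with the flat module $M$, use Lemma \ref{tensor-lemma} to commute $\otimes_AM$ with the product, and use flatness to identify $(N/N_\lambda)\otimes_AM$ with $(N\otimes_AM)/(N_\lambda\otimes_AM)$. Your write-up is in fact a bit more explicit than the paper's about the final identification of the kernel with $\bigcap_\lambda(N_\lambda\otimes_AM)$, which is a welcome clarification but not a different argument.
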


\begin{proof} First of all, since $M$ is flat over $A$, for all $\lambda\in\Lambda$ there is a canonical isomorphism
\begin{equation} \label{quotient-tensor-eq}
(N/N_\lambda)\otimes_AM\simeq(N\otimes_AM)/(N_\lambda\otimes_AM)
\end{equation}
of $A$-modules. Define the $A$-linear map
\[ \begin{array}{ccc}
   N & \overset{\varphi}{\longrightarrow} & \displaystyle{\prod_{\lambda\in\Lambda}(N/N_\lambda)}\\[6mm]
   x & \longmapsto & \bigl({[x]}_\lambda\bigr)_{\lambda\in\Lambda}
   \end{array} \]
where ${[x]}_\lambda$ denotes the natural image of $x\in N$ in $N/N_\lambda$. Then the sequence
\[ 0\longrightarrow\bigcap_{\lambda\in\Lambda}N_\lambda\longrightarrow N\overset{\varphi}{\longrightarrow}\prod_{\lambda\in\Lambda}(N/N_\lambda) \]
is exact and, since $M$ is free of finite rank over $A$, by Lemma \ref{tensor-lemma} and isomorphism \eqref{quotient-tensor-eq} the same is true of the sequence
\[ 0\longrightarrow\bigg(\bigcap_{\lambda\in\Lambda}N_\lambda\bigg)\otimes_AM\longrightarrow N\otimes_AM\longrightarrow\prod_{\lambda\in\Lambda}(N\otimes_AM)/(N_\lambda\otimes_AM), \]
which proves the proposition. \end{proof}

\subsection{Proof of the irreducibility criterion} \label{main-subsection}

The main result on the irreducibility of $\rho_V$ we wish to prove is the following

\begin{teo} \label{thm1}
Let $R$ be a noetherian domain with field of fractions $K$, let $\rho_V$, $\rho_L$ be as above and assume that the lattice $L$ is free over $R$. Suppose that there exists a set $\cl S$ of prime ideals of $R$ such that
\begin{itemize}
\item[(i)] $\bigcap_{\fr p\in\cl S}\fr p=\{0\}$;
\item[(ii)] $\bar\rho_{L,\fr p}$ is irreducible for every $\fr p\in\cl S$.
\end{itemize}
Then $\rho_V$ is irreducible.
\end{teo}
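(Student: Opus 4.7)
The plan is to argue by contradiction: by Proposition \ref{prop1}, it suffices to show that $\rho_L$ is irreducible, so I assume instead that there exists a finitely generated $G$-stable $R$-submodule $M\subseteq L$ with $KM$ neither $\{0\}$ nor $V$, and aim to contradict hypothesis (i).

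The first step is to localize at each $\fr p\in\cl S$ and exploit the irreducibility of $\bar\rho_{L,\fr p}$. The image of $M_\fr p$ in $L_\fr p/\fr p L_\fr p$ is a $G$-invariant $k_\fr p$-subspace, hence either $\{0\}$ or all of $L_\fr p/\fr p L_\fr p$. In the latter case $M_\fr p+\fr p L_\fr p=L_\fr p$, and since $R_\fr p$ is local and $L_\fr p/M_\fr p$ is finitely generated over $R_\fr p$, Nakayama's lemma gives $M_\fr p=L_\fr p$; but then $KM=KM_\fr p=KL_\fr p=V$, contradicting $KM\neq V$. Therefore $M_\fr p\subseteq \fr p L_\fr p$ for every $\fr p\in\cl S$.

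Pulling this back from $L_\fr p$ to $L$ rests on the identity $L\cap\fr p L_\fr p=\fr p L$; this is where the freeness of $L$ enters, reducing the verification to the componentwise equality $R\cap\fr p R_\fr p=\fr p$, which in turn holds because $\fr p$ is prime. Consequently $M\subseteq \fr p L$ for every $\fr p\in\cl S$, and hence
\[ M\subseteq \bigcap_{\fr p\in\cl S}\fr p L. \]

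The decisive step is to identify this intersection with $\bigl(\bigcap_{\fr p\in\cl S}\fr p\bigr)L$. This is precisely Proposition \ref{intersection-tensor-prop}, applied with $A=R$, ambient module $N=R$, family of submodules $\{\fr p\}_{\fr p\in\cl S}$, and free module of finite rank equal to $L$; under these identifications $N\otimes_AM=L$ and $N_\fr p\otimes_AM=\fr p L$. Combined with hypothesis (i), this forces $M\subseteq\{0\}\cdot L=\{0\}$, contradicting $KM\neq\{0\}$. The main obstacle is exactly this commutation of a possibly infinite intersection of ideals with $\otimes_R L$: it is the reason Proposition \ref{intersection-tensor-prop} is singled out in advance and the reason the theorem is stated under the freeness assumption on $L$; the remaining ingredients (the Nakayama step and the identity $L\cap\fr p L_\fr p=\fr p L$) are routine once this algebraic fact is available.
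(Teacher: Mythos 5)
Your proposal is correct and follows essentially the same route as the paper's proof: localization plus Nakayama and the irreducibility of $\bar\rho_{L,\fr p}$ to get $M_\fr p\subseteq\fr pL_\fr p$, the freeness of $L$ to deduce $M\subseteq\fr pL_\fr p\cap L=\fr pL$, and Proposition \ref{intersection-tensor-prop} (together with $\fr p\otimes_RL\simeq\fr pL$) to conclude $\bigcap_{\fr p\in\cl S}\fr pL=\{0\}$. The only cosmetic difference is that the paper passes through a free submodule $M'\subseteq M$ of positive rank before deriving the contradiction, whereas you contradict $KM\neq\{0\}$ directly, which is equally valid.
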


Of course, if the trivial ideal of $R$ belongs to $\cl S$ there is nothing to prove, so we can implicitly assume that $(0)\not\in\cl S$.

\begin{proof} Let $W\neq V$ be a non-trivial $G$-stable $K$-subspace of $V$ of dimension $m\geq1$ and define
\[ M:=L\cap W. \]
It is clear that $M$ is an $R$-submodule of $L$ which is $G$-stable. Moreover, $M$ is a lattice of $W$ by \cite[Ch. VII, \S 4, Proposition 3 (ii)]{b}.

Let now $\fr p$ be a prime ideal of $R$ such that $\bar\rho_{L,\fr p}$ is irreducible and set
\[ M_\fr p:=R_\fr pM. \]
Then $M_\fr p$ is an $R_\fr p$-submodule of $L_\fr p$ which is $G$-stable. Furthermore, $M_\fr p\not=L_\fr p$ because otherwise we would have
\[ W=KM_\fr p=KL_\fr p=V, \]
contrary to our assumption. By Nakayama's lemma, this implies that $M_\fr p+\fr pL_\fr p\not=L_\fr p$, hence the image of $M_\fr p$ in $L_\fr p/\fr pL_\fr p$ is strictly smaller than $L_\fr p/\fr pL_\fr p$. But $\bar\rho_{L,\fr p}$ is irreducible, so there is an inclusion
\begin{equation} \label{loc-inclusion-eq}
M_\fr p\subset\fr pL_\fr p.
\end{equation}
Since $M$ is a lattice in $W$, it contains a free $R$-module $M'$ of rank $m$ (cf. part 1) of Remark \ref{lattice-rem}). To prove that $\rho_V$ is irreducible we want to show that $M'=\{0\}$, a fact that would contradict the non-triviality of $W$. Since $M'$ injects naturally into $M_\fr p$, inclusion \eqref{loc-inclusion-eq} yields an inclusion
\begin{equation} \label{inclusion-eq}
M'\subset\fr pL_\fr p\cap L=\fr pL
\end{equation}
for all prime ideals $\fr p$ of $R$ such that $\bar\rho_{L,\fr p}$ is irreducible. Observe that the equality appearing in \eqref{inclusion-eq} is an easy consequence of the fact that $L$ is free (of finite rank) over $R$. In light of condition (ii), to show that $M'=\{0\}$ it is then enough to show that
\begin{equation} \label{intersection-eq}
\bigcap_{\fr p\in\cl S}\fr pL=\{0\}.
\end{equation}
Since $L$ is free (hence flat) over $R$, for any ideal $\fr a$ of $R$ there is a canonical isomorphism
\begin{equation} \label{ideal-isom-eq}
\fr aL\simeq\fr a\otimes_RL
\end{equation}
of $R$-modules. Moreover, since $L$ has finite rank over $R$, if $\cl A$ is a family of ideals of $R$ then Proposition \ref{intersection-tensor-prop} provides a canonical isomorphism
\begin{equation} \label{tensor-eq}
\bigg(\bigcap_{\fr a\in\cl A}\fr a\bigg)\otimes_RL\simeq\bigcap_{\fr a\in\cl A}(\fr a\otimes_RL)
\end{equation}
of $R$-modules. Applying \eqref{ideal-isom-eq} and \eqref{tensor-eq} to $\cl A=\cl S$ yields an isomorphism
\[ \bigcap_{\fr p\in\cl S}\fr pL\simeq\bigg(\bigcap_{\fr p\in\cl S}\fr p\bigg)\otimes_RL \]
of $R$-modules, and \eqref{intersection-eq} follows immediately from condition (i). \end{proof}

\begin{rem}
1) Since $R$ is a domain, any set of non-trivial prime ideals of $R$ satisfying condition (i) of Theorem \ref{thm1} is necessarily infinite. Thus the irreducibility criterion of Theorem \ref{thm1} is interesting only for rings with infinitely many prime ideals.

2) For an example of a different representation-theoretic context in which irreducibility modulo prime ideals plays an important role see \cite[\S 3]{gross}.  
\end{rem}

It is useful to slightly reformulate condition (ii) of the above theorem. For every prime ideal $\fr p$ of $R$ define the representation $\tilde\rho_{L,\fr p}$ over the noetherian domain $R/\fr p$ as the composition
\[ \tilde\rho_{L,\fr p}:G\overset{\rho_L}\longrightarrow\Aut_R(L)\longrightarrow\Aut_{R/\fr p}(L/\fr pL), \]
where the second map is induced by the canonical projection $R\rightarrow R/\fr p$. If $\text{frac}(R/\fr p)$ is the fraction field of $R/\fr p$ then there is a natural identification
\[ \text{frac}(R/\fr p)=k_\fr p \]
and $L/\fr pL$ is an $R/\fr p$-lattice in $L_{\fr p}/\fr pL_{\fr p}$, hence Proposition \ref{prop1} ensures that condition (ii) in Theorem \ref{thm1} is equivalent to
\begin{itemize}
\item[(ii')] $\tilde\rho_{L,\fr p}$ is irreducible for every $\fr p\in\cl S$.
\end{itemize}
For the rest of the paper we make the following

\begin{assumption}
The lattice $L$ is \emph{free} over $R$.
\end{assumption}

The next result is an easy consequence of the previous theorem.

\begin{coro} \label{coro1}
With notation as above, if $\bar\rho_{L,\fr p}$ is irreducible for infinitely many height $1$ prime ideals $\fr p$ of $R$ then $\rho_V$ is irreducible.
\end{coro}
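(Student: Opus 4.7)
The plan is to deduce the corollary directly from Theorem \ref{thm1} by exhibiting a suitable set $\cl S$ of prime ideals. Let $\cl S$ denote the set of height $1$ prime ideals $\fr p$ of $R$ for which $\bar\rho_{L,\fr p}$ is irreducible; by hypothesis $\cl S$ is infinite, and condition (ii) of Theorem \ref{thm1} holds by construction. The only thing left to verify is condition (i), i.e., that $\bigcap_{\fr p \in \cl S} \fr p = \{0\}$.

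For this, I would rely on a standard consequence of Krull's principal ideal theorem: in a noetherian domain, any nonzero element $x \in R$ lies in only finitely many height $1$ prime ideals. Indeed, if $\fr p$ is a height $1$ prime containing $x$, then $\fr p$ contains some minimal prime $\fr q$ of the principal ideal $(x)$; since $R$ is a domain and $x \neq 0$, Krull's Hauptidealsatz forces $\fr q$ to have height exactly $1$, and the containment $\fr q \subseteq \fr p$ between primes of the same height in a chain starting at $(0)$ forces equality. Hence the height $1$ primes containing $x$ are exactly the minimal primes of $(x)$, which are finite in number because $R$ is noetherian.

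Taking this for granted, if some nonzero $x$ belonged to $\bigcap_{\fr p \in \cl S} \fr p$, then $x$ would be contained in infinitely many height $1$ primes, contradicting the finiteness just recalled. Therefore $\bigcap_{\fr p \in \cl S}\fr p = \{0\}$, condition (i) of Theorem \ref{thm1} is satisfied, and the theorem yields the irreducibility of $\rho_V$.

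I do not expect any real obstacle: the whole content is the Krull-type finiteness statement, and once this is invoked the corollary is an immediate instance of Theorem \ref{thm1}. The only point worth being a little careful about is ruling out non-minimal height $1$ primes over $(x)$, but the dimension argument above handles that cleanly.
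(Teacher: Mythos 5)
Your proof is correct and follows essentially the same route as the paper: both arguments reduce condition (i) of Theorem \ref{thm1} to the fact that an ideal in a noetherian ring has only finitely many minimal primes, the paper applying this directly to $I=\bigcap_{\fr p\in\cl S}\fr p$ while you apply it to a principal ideal $(x)$ with $x$ a hypothetical nonzero element of the intersection (using Krull's Hauptidealsatz to identify the height $1$ primes containing $x$ with the minimal primes of $(x)$). The difference is purely cosmetic.
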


\begin{proof} By Theorem \ref{thm1}, it suffices to show that the intersection of infinitely many height $1$ prime ideals of $R$ is trivial. So let $\cl S$ be an infinite set of prime ideals of $R$ of height $1$ and define
\[ I:=\bigcap_{\fr p\in\cl S}\fr p. \]
If $I\not=\{0\}$ then every $\fr p\in\cl S$, having height $1$, is minimal among the prime ideals of $R$ containing $I$. But the set of such prime ideals of $R$ has only finitely many minimal elements by \cite[Exercise 4.12]{mat}, and this is a contradiction. \end{proof}

Another by-product of Theorem \ref{thm1} is the following

\begin{prop} \label{prop2}
Let $R$ be a local noetherian domain with maximal ideal $\fr m$ and let $\rho_V$, $\rho_L$ be as above. Suppose that the representation $\bar\rho_{L,\fr m}$ is irreducible and that there exists a set $\cl S$ of prime ideals of $R$ such that
\begin{itemize}
\item[(i)] $\bigcap_{\fr p\in\cl S}\fr p=\{0\}$; 
\item[(ii)] $R/\fr p$ is a discrete valuation ring for every $\fr p\in\cl S$.
\end{itemize}
Then $\rho_V$ is irreducible.
\end{prop}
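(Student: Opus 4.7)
The natural strategy is to use Theorem \ref{thm1} with the very same set $\cl S$: condition (i) of the theorem is already built into the hypotheses, so the only thing I need to check is condition (ii), i.e.\ that $\bar\rho_{L,\fr p}$ is irreducible for every $\fr p\in\cl S$. Since $R/\fr p$ is a discrete valuation ring by hypothesis, the plan is to reduce this verification, one prime at a time, to Proposition \ref{dvr-prop}.

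Fix $\fr p\in\cl S$. Because $L$ is free of finite rank over $R$, the quotient $L/\fr pL$ is free (of the same rank) over $R/\fr p$, and the natural map $L/\fr pL\to L_{\fr p}/\fr pL_{\fr p}$ is injective with $k_\fr p$-span equal to the target, so $L/\fr pL$ is an $R/\fr p$-lattice in the $k_\fr p$-vector space where $\bar\rho_{L,\fr p}$ acts. Thus I may apply Proposition \ref{dvr-prop} to the DVR $\cl O:=R/\fr p$, the lattice $L/\fr pL$, and the representation $\tilde\rho_{L,\fr p}$, provided I can exhibit the reduction modulo the maximal ideal $\fr m/\fr p$ of $R/\fr p$ and show it is irreducible.

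The key identification is that, because $\fr p\subseteq\fr m$,
\[ (L/\fr pL)\big/(\fr m/\fr p)(L/\fr pL)=L\big/(\fr pL+\fr mL)=L/\fr mL, \]
and under this identification the residual representation of $\tilde\rho_{L,\fr p}$ modulo $\fr m/\fr p$ coincides with $\bar\rho_{L,\fr m}$, which is irreducible by hypothesis. Proposition \ref{dvr-prop} therefore yields the irreducibility of the representation of $G$ on $(L/\fr pL)\otimes_{R/\fr p}k_\fr p\simeq L_\fr p/\fr pL_\fr p$, i.e.\ the irreducibility of $\bar\rho_{L,\fr p}$. This being so for every $\fr p\in\cl S$, condition (ii) of Theorem \ref{thm1} is satisfied and the conclusion follows.

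The only step that requires even a moment's care is verifying that the residue field of $R/\fr p$ is canonically $R/\fr m$ and that the obvious isomorphism matches the two residual representations; once that is in hand, the argument is a direct combination of Proposition \ref{dvr-prop} with Theorem \ref{thm1}.
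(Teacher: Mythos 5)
Your proposal is correct and follows essentially the same route as the paper: for each $\fr p\in\cl S$ one applies Proposition \ref{dvr-prop} over the discrete valuation ring $R/\fr p$, identifying the reduction of $\tilde\rho_{L,\fr p}$ modulo $\fr m/\fr p$ with $\bar\rho_{L,\fr m}$, and then invokes Theorem \ref{thm1} (the paper phrases the per-prime conclusion via the equivalent condition (ii'), which is exactly your lattice observation that $L/\fr pL$ is an $R/\fr p$-lattice in $L_\fr p/\fr pL_\fr p$).
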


\begin{proof} The representation $\bar\rho_{L,\fr m}$ coincides with $\tilde\rho_{L,\fr m}$ and so it naturally identifies, for every prime ideal $\fr p\in\cl S$, with the reduction of $\tilde\rho_{L,\fr p}$ modulo the maximal ideal $\fr m/\fr p$ of the discrete valuation ring $R/\fr p$. Since we are assuming that $\bar\rho_{L,\fr m}$ is irreducible, Proposition \ref{dvr-prop} ensures that $\tilde\rho_{L,\fr p}$ is irreducible for every $\fr p\in\cl S$. In other words, condition (ii') is satisfied, and the irreducibility of $\rho_V$ follows from Theorem \ref{thm1}. \end{proof}  

\subsection{Representations over regular local rings} \label{regular-subsec}

In this subsection we consider a regular local ring $R$ with maximal ideal $\fr m$ and residue field $\kappa:=R/\fr m$. It is known that such a ring is necessarily a domain (see, e.g., \cite[Theorem 14.3]{mat}). If one also assumes that $R$ is complete with respect to its $\fr m$-adic topology (which we will not do) then classical structure theorems of Cohen (\cite{cohen}) give very precise descriptions of $R$ in terms of formal power series rings over either $\kappa$ or a complete discrete valuation ring of characteristic zero with residue field $\kappa$. Recall that a regular ring is a noetherian ring such that every localization $R_\fr p$ of $R$ at a prime ideal $\fr p$ is a regular local ring. If $R$ is a regular (respectively, regular local) ring then any formal power series ring over $R$ is a regular (respectively, regular local) ring as well (\cite[Theorem 19.5]{mat}). 

\begin{teo} \label{thm2}
With notation as above, let $R$ be a regular local ring. If $\bar\rho_{L,\fr m}$ is irreducible then $\rho_V$ is irreducible.
\end{teo}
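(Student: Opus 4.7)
The plan is to induct on the Krull dimension $n=\dim R$. The base case $n=1$ is immediate, since a regular local ring of dimension one is a discrete valuation ring and Proposition~\ref{dvr-prop} applies directly. For the inductive step I would assume $n\geq 2$ and the conclusion of Theorem~\ref{thm2} for regular local rings of strictly smaller dimension, and then verify the hypotheses of Theorem~\ref{thm1} for an explicit family $\cl S$ of height-one primes of $R$.

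To construct $\cl S$, I would fix a regular system of parameters $x_1,\dots,x_n$ of $\fr m$, set $f_k:=x_1+x_2^k$ for each integer $k\geq 1$, and take $\cl S:=\{(f_k):k\geq 1\}$. Inspecting the image of $f_k$ in $\fr m/\fr m^2$ shows $f_k\in\fr m\setminus\fr m^2$, so $f_k$ extends to a regular system of parameters; consequently $R/(f_k)$ is a regular local ring of dimension $n-1$ and $(f_k)$ is a height-one prime of $R$. To verify condition (ii) of Theorem~\ref{thm1} I would argue via the equivalent formulation (ii'): the representation $\tilde\rho_{L,(f_k)}$ is realized on the free $R/(f_k)$-module $L/(f_k)L$ (freeness being inherited from $L$), and its reduction modulo the maximal ideal $\fr m/(f_k)$ of $R/(f_k)$ is canonically identified with $\bar\rho_{L,\fr m}$, which is irreducible by hypothesis. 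The inductive hypothesis applied to $R/(f_k)$ then yields that $\bar\rho_{L,(f_k)}$ is irreducible.

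The remaining task, verifying condition (i) that $\bigcap_{k\geq 1}(f_k)=\{0\}$, is what I expect to be the main obstacle of the argument. The essential tool here is the Auslander--Buchsbaum theorem, by which every regular local ring is a unique factorization domain; thus each nonzero element of $R$ is divisible by only finitely many pairwise non-associate irreducibles. A short calculation modulo $\fr m^2$, using the linear independence of $x_1$ and $x_2$ in $\fr m/\fr m^2$ (together with the fact that $x_2$ is not an associate of $f_m$ for any $m\geq 2$), shows that the principal primes $(f_k)$ are pairwise distinct; the UFD property then forces $\bigcap_k (f_k)=\{0\}$. Both hypotheses of Theorem~\ref{thm1} being satisfied, the irreducibility of $\rho_V$ follows.
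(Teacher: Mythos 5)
Your proposal is correct and follows essentially the same route as the paper: induction on $\dim R$, with base case Proposition \ref{dvr-prop}, and in the inductive step an explicit infinite family of principal primes generated by elements of $\fr m\setminus\fr m^2$ (the paper uses $x_2-x_1^i$, you use $x_1+x_2^k$) whose quotients are regular local of one smaller dimension, so that Theorem \ref{thm1} applies. The only divergence is cosmetic: to see that the intersection of your primes is trivial you invoke the Auslander--Buchsbaum UFD property, whereas the paper gets the same conclusion from Corollary \ref{coro1} (finiteness of the set of minimal primes over a nonzero ideal), and both arguments are valid.
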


\begin{proof} We proceed by induction on the Krull dimension $d$ of $R$. If $d=1$ then $R$ is a discrete valuation ring, hence the claim of the theorem follows from Proposition \ref{dvr-prop}. Now suppose that the theorem has been proved for rings of dimension $t$ and take $d=t+1$. Write $\fr m=(x_1,\dots,x_{t+1})$ and consider the infinite set $\cl S$ of prime ideals of $R$ given by
\[ \cl S:=\bigl\{(x_2-x_1^i)\mid i\geq1\bigr\}. \]
We want to prove that $\bar\rho_{L,\fr p}$ is irreducible for every $\fr p\in\cl S$. If $\fr p\in\cl S$ then the noetherian local ring $R/\fr p$ is regular of dimension $t$ (\cite[Theorem 14.2]{mat}); moreover, the reduction of $\tilde\rho_{L,\fr p}$ modulo the maximal ideal $\fr m/\fr p$ coincides with $\bar\rho_{L,\fr m}$, which is irreducible by hypothesis. The inductive assumption then gives the irreducibility of $\tilde\rho_{L,\fr p}$, which is equivalent to the irreducibility of $\bar\rho_{L,\fr p}$. Finally, since all prime ideals in $\cl S$ have height $1$ (for example, by Krull's principal ideal theorem), the irreducibility of $\rho_V$ follows from Corollary \ref{coro1}. \end{proof} 

Notice that this theorem extends the result proved in Proposition \ref{dvr-prop} for discrete valuation rings to the much larger class of regular local rings.

\begin{coro} \label{regular-coro}
{\rm (1)} Let $R$ be a noetherian domain and suppose that there exists a prime ideal $\fr p$ of $R$ such that 
\begin{itemize}
\item[(i)] $R_\fr p$ is a regular local ring;
\item[(ii)] $\bar\rho_{L,\fr p}$ is irreducible.
\end{itemize}
Then $\rho_V$ is irreducible.

{\rm (2)} If $R$ is a regular domain and there exists a prime ideal $\fr p$ of $R$ such that $\bar\rho_{L,\fr p}$ is irreducible then $\rho_V$ is irreducible.
\end{coro}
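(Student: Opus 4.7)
The plan is to reduce both parts to Theorem \ref{thm2} by localizing at the prime ideal $\fr p$. The key observation is that the data $(R,L,\rho_L)$ and the prime $\fr p$ give rise, after localization, to exactly the setup to which Theorem \ref{thm2} applies, and that the residue representation controlling irreducibility does not change in the process.

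For part (1), I would first pass to the localization and consider the $R_\fr p$-module $L_\fr p = L \otimes_R R_\fr p$. Since $L$ is free of finite rank over $R$, $L_\fr p$ is free of the same rank over $R_\fr p$, so the standing freeness hypothesis is preserved. Moreover $L_\fr p$ is a $G$-stable $R_\fr p$-lattice of $V$ (the fraction field of $R_\fr p$ being again $K$), realizing $\rho_V$ over $R_\fr p$. By hypothesis (i) the ring $R_\fr p$ is a regular local ring, with maximal ideal $\fr p R_\fr p$ and residue field $k_\fr p$. The next step is to identify the representation obtained by reducing $\rho_{L_\fr p}$ modulo $\fr p R_\fr p$: since $R_\fr p$ is already local at $\fr p R_\fr p$, one has $(L_\fr p)_{\fr p R_\fr p}=L_\fr p$ and $\fr p R_\fr p\cdot L_\fr p=\fr p L_\fr p$, so this reduction coincides with the representation $\bar\rho_{L,\fr p}:G\to\Aut_{k_\fr p}(L_\fr p/\fr p L_\fr p)$ from the definition in \S \ref{reductions-subsec}. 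Hypothesis (ii) then tells us that the residue representation at the maximal ideal of the regular local ring $R_\fr p$ is irreducible, and Theorem \ref{thm2} applied to $(R_\fr p, L_\fr p)$ yields the irreducibility of $\rho_V$.

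For part (2), the plan is simply to invoke part (1): by definition a regular domain has the property that every localization at a prime is a regular local ring, so condition (i) of part (1) is automatic. The existence of a single prime $\fr p$ at which $\bar\rho_{L,\fr p}$ is irreducible therefore gives the irreducibility of $\rho_V$ at once.

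I do not expect a serious obstacle here. The only point that requires mild care is the compatibility check identifying the ``$\fr p$-reduction'' defined globally in the paper with the ``maximal-ideal reduction'' appearing in the hypothesis of Theorem \ref{thm2} after localization; this reduces to the fact that localizing an already-local ring at its maximal ideal does nothing, together with the elementary identity $\fr p R_\fr p\cdot L_\fr p=\fr p L_\fr p$.
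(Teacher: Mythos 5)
Your proposal is correct and follows essentially the same route as the paper: localize at $\fr p$, observe that $L_\fr p$ is a free $G$-stable $R_\fr p$-lattice of $V$ whose reduction at the maximal ideal $\fr pR_\fr p$ is $\bar\rho_{L,\fr p}$, apply Theorem \ref{thm2} over the regular local ring $R_\fr p$ (using that $\mathrm{frac}(R_\fr p)=K$), and deduce part (2) from part (1) by the definition of a regular ring. The compatibility checks you flag are exactly the (implicit) content of the paper's one-line argument, so there is nothing to add.
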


\begin{proof} Since, by definition of a regular ring, part (2) is an immediate consequence of part (1), it suffices to prove the first claim. Since $R_\fr p$ is a regular local ring and $\bar\rho_{L,\fr p}$ is irreducible, the representation $\rho_{L,\fr p}$ over $R_\fr p$ is irreducible by Theorem \ref{thm2}. But the fraction field of $R_\fr p$ is equal to $K$, hence the corollary follows from Proposition \ref{prop1}. \end{proof}

In particular, this corollary applies to the special case where $R$ is a Dedekind domain (e.g., the ring of algebraic integers of a number field).

\section{Arithmetic applications} \label{arithmetic-section}

We offer applications of Theorem \ref{thm1} (or, rather, of Theorem \ref{thm2}) in situations of arithmetic interest.

\subsection{Universal deformations of residual representations} \label{universal-subsec}

We briefly recall the basic definitions about deformations of residual representations; for more details, the reader is referred to the original article \cite{mazur} by Mazur and to the survey papers \cite{gouvea} and \cite{mazur2}.

Fix a prime number $p$ and a finite field $\kappa$ of characteristic $p$. Let $\Pi$ be a profinite group satisfying Mazur's finiteness condition $\Phi_p$ (\cite[\S 1.1]{mazur}), i.e. such that for every open subgroup $\Pi_0$ of $\Pi$ there are only finitely many continuous homomorphisms from $\Pi_0$ to the field $\F_p$ with $p$ elements. A remarkable example of a group with this property is represented by the Galois group $G_{\Q,S}$ over $\Q$ of the maximal field extension of $\Q$ which is unramified outside a finite set $S$ of primes of $\Q$. 

If $n\geq1$ is an integer, by a \emph{residual representation of dimension $n$} (of $\Pi$ over $\kappa$) we shall mean a continuous representation
\[ \bar\rho:\Pi\longrightarrow\GL_n(\kappa). \]
Write $\cl C$ for the category of \emph{coefficient rings} in the sense of Mazur (\cite[\S 2]{mazur2}), whose objects are complete noetherian local rings with residue field $\kappa$ and whose morphisms are (local) homomorphisms of complete local rings inducing the identity on $\kappa$. More generally, for an object $\Lambda\in\cl C$ we can consider the category $\cl C_\Lambda$ of \emph{coefficient $\Lambda$-algebras}, whose objects are complete noetherian local $\Lambda$-algebras with residue field $\kappa$ and whose morphisms are coefficient-ring homomorphisms which are also $\Lambda$-algebra homomorphisms. Observe that $\cl C=\cl C_{W(\kappa)}$ where $W(\kappa)$ is the ring of Witt vectors of $\kappa$, i.e. the (unique) unramified extension of $\Z_p$ with residue field $\kappa$.

If $A\in\cl C_\Lambda$ then two continuous representations
\[ \rho_1,\rho_2:\Pi\longrightarrow\GL_n(A) \]
will be said to be \emph{strictly equivalent} if there exists $M$ in the kernel of the reduction map $\GL_n(A)\rightarrow\GL_n(\kappa)$ such that $\rho_1=M\rho_2M^{-1}$. Given a residual representation $\bar\rho$ as above, a \emph{deformation} of $\bar\rho$ to $A\in\cl C$ is a strict equivalence class $\boldsymbol\rho$ of (continuous) representations
\[ \rho:\Pi\longrightarrow\GL_n(A) \]
which reduce to $\bar\rho$ via the map $\GL_n(A)\rightarrow\GL_n(\kappa)$. By abuse of notation, we will write
\[ \boldsymbol\rho:\Pi\longrightarrow\GL_n(A) \]
to denote a deformation of $\bar\rho$ to $A$. 

Finally, given an $n$-dimensional residual representation $\bar\rho$, endow $\kappa^n$ with a (left) $\Pi$-module structure via $\bar\rho$ and define
\[ C(\bar\rho):=\Hom_\Pi(\kappa^n,\kappa^n) \]
to be its ring of $\Pi$-module endomorphisms.

\begin{teo}[Mazur, Ramakrishna] \label{mr-thm}
With notation as above, let $\Lambda\in\cl C$ and let
\[ \bar\rho:\Pi\longrightarrow\GL_n(\kappa) \]
be a residual representation such that $C(\bar\rho)=\kappa$. Then there exists a ring $\cl R_\Lambda=\cl R_\Lambda(\Pi,\kappa,\bar\rho)\in\cl C_\Lambda$ and a deformation
\[ \boldsymbol\rho:\Pi\longrightarrow\GL_n(\cl R_\Lambda) \]
of $\bar\rho$ to $\cl R$ such that any deformation of $\bar\rho$ to a ring $A\in\cl C_\Lambda$ is obtained from $\boldsymbol\rho$ via a unique morphism $\cl R_\Lambda\rightarrow A$.
\end{teo}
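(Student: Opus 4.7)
The plan is to verify Schlessinger's criteria for pro-representability of the deformation functor
\[ D_{\bar\rho}: \cl C_\Lambda \longrightarrow \mathrm{Sets}, \qquad A \longmapsto \bigl\{\text{deformations of }\bar\rho\text{ to }A\bigr\}, \]
on its restriction to Artinian objects of $\cl C_\Lambda$, and then to recover $\cl R_\Lambda$ as the inverse limit $\invlim_k \cl R_\Lambda/\fr m_{\cl R_\Lambda}^k$ of the Artinian universal quotients. This reduction to the Artinian case is legitimate since any continuous representation $\Pi \to \GL_n(A)$ into a complete Noetherian local $A$ factors through the system of Artinian quotients $A/\fr m_A^k$, and $D_{\bar\rho}(\kappa)=\{[\bar\rho]\}$ is trivially a singleton.

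The substance of the argument is the verification of Schlessinger's axioms H1--H4 for Artinian coefficient $\Lambda$-algebras. Concretely, for a small surjection $A_2 \twoheadrightarrow A_0$ and any morphism $A_1 \to A_0$, one analyzes the canonical map
\[ \alpha: D_{\bar\rho}(A_1 \times_{A_0} A_2) \longrightarrow D_{\bar\rho}(A_1) \times_{D_{\bar\rho}(A_0)} D_{\bar\rho}(A_2). \]
Surjectivity (H1) and bijectivity in the tangent space case (H2) are formal: given compatible lifts whose reductions to $A_0$ differ only by a strict equivalence $M_0 \in \ker(\GL_n(A_0)\to\GL_n(\kappa))$, one lifts $M_0$ through the pro-unipotent kernel of $\GL_n(A_1)\twoheadrightarrow\GL_n(\kappa)$ and conjugates one of the lifts to make the reductions agree on the nose, then glues. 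For H3, the standard identification
\[ D_{\bar\rho}(\kappa[\epsilon]) \simeq H^1_{\mathrm{cont}}(\Pi, \ad\bar\rho), \qquad \ad\bar\rho = \End_\kappa(\kappa^n), \]
with $\Pi$ acting by conjugation through $\bar\rho$, combined with Mazur's finiteness condition $\Phi_p$ and the finiteness of $\kappa$, yields a finite-dimensional tangent space.

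The hypothesis $C(\bar\rho)=\kappa$ enters decisively in the verification of H4, which is exactly the condition separating a hull from a genuine universal deformation ring: one needs $\alpha$ bijective when $A_1 = A_2$. Injectivity there amounts to the statement that any conjugation automorphism of a lift $\rho:\Pi\to\GL_n(A)$ of $\bar\rho$ is a scalar, and a standard infinitesimal dévissage along the finite filtration of $A$ by powers of its maximal ideal reduces this, at each step, to the base case that any $\Pi$-equivariant endomorphism of $\kappa^n$ is a scalar --- which is precisely $C(\bar\rho)=\kappa$. Once H1--H4 are verified, Schlessinger's theorem yields the Artinian universal rings, and their inverse limit produces the required $\cl R_\Lambda \in \cl C_\Lambda$ together with the universal formal deformation $\boldsymbol\rho:\Pi\to\GL_n(\cl R_\Lambda)$; continuity of $\boldsymbol\rho$ is built into the construction, as each finite-level lift is continuous and the target carries the $\fr m_{\cl R_\Lambda}$-adic topology.

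The main obstacle I expect is the careful propagation of the Schur-type argument through the filtration by powers of the maximal ideal in the verification of H4: the base case is literally $C(\bar\rho)=\kappa$, but at each inductive step one must check that the obstruction to a would-be non-scalar automorphism descending lives in a trivial cohomology group, which requires keeping track of strict equivalence rather cleanly. Every other step --- the tangent-space computation, the gluing in H1--H2, and the final inverse limit --- is essentially formal once this central technical point is in place.
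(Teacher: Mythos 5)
The paper offers no proof of this theorem: it is quoted as a result of Mazur--Ramakrishna with a pointer to \cite[Theorem 3.10]{gouvea}, whose argument is precisely the Schlessinger-criteria route you outline (reduction to Artinian quotients, tangent space $H^1_{\mathrm{cont}}(\Pi,\ad\bar\rho)$ finite-dimensional by condition $\Phi_p$, the hypothesis $C(\bar\rho)=\kappa$ entering through an infinitesimal d\'evissage in the verification of H4, then passage to the inverse limit). Your sketch is a faithful blueprint of that standard proof; the only imprecision is that in H1 the conjugating matrix $M_0$ must be lifted along the small surjection $A_2\twoheadrightarrow A_0$ (where lifting of elements of $1+\M_n(\fr m_{A_0})$ is actually possible), rather than through $\GL_n(A_1)\twoheadrightarrow\GL_n(\kappa)$ as written.
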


For a proof, see \cite[Theorem 3.10]{gouvea}. We call $\cl R_\Lambda$ the \emph{universal deformation ring} and $\boldsymbol\rho$ the \emph{universal deformation} of $\bar\rho$. 

\begin{rem} \label{absolutely-irr-rem}
By Schur's lemma, $C(\bar\rho)=\kappa$ if $\bar\rho$ is absolutely irreducible, so any absolutely irreducible residual representation admits universal deformation.
\end{rem}

Let now $\ad(\bar\rho)$ be the adjoint representation of $\bar\rho$, i.e. the $\kappa$-vector space $\M_n(\kappa)$ on which $\Pi$ acts on the left by conjugation via $\bar\rho$ (hence $C(\bar\rho)$ is non-canonically isomorphic to the subspace of $\Pi$-invariants of $\ad(\bar\rho)$). The following result is part of a theorem of Mazur, for a proof of which we refer to \cite[Proposition 2]{mazur} or \cite[Theorem 4.2]{gouvea}.

\begin{teo}[Mazur] \label{mazur-thm}
Suppose $C(\bar\rho)=\kappa$, let $\cl R_\Lambda$ be the universal deformation ring of $\bar\rho$ and define 
\begin{equation} \label{d-eq}
d_1:=\dim_\kappa H^1\bigl(\Pi,\ad(\bar\rho)\bigr),\qquad d_2:=\dim_\kappa H^2\bigl(\Pi,\ad(\bar\rho)\bigr).
\end{equation}
If $d_2=0$ then $\cl R_\Lambda\simeq\Lambda[\![x_1,\dots,x_{d_1}]\!]$. 
\end{teo}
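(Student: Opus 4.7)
The plan is to produce mutually inverse continuous local $\Lambda$-algebra homomorphisms between $\cl R_\Lambda$ and $R := \Lambda[\![x_1, \ldots, x_{d_1}]\!]$, exploiting the standard identification of $H^1(\Pi, \ad(\bar\rho))$ with the tangent space of the deformation functor and of $H^2(\Pi, \ad(\bar\rho))$ with its obstruction space.

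First I would identify the relative cotangent space $\fr m_{\cl R_\Lambda}/\bigl(\fr m_{\cl R_\Lambda}^2 + \fr m_\Lambda \cl R_\Lambda\bigr)$ with the $\kappa$-dual of $H^1(\Pi, \ad(\bar\rho))$. This rests on the classical bijection between deformations of $\bar\rho$ to the ring of dual numbers $\kappa[\epsilon]$ and cohomology classes in $H^1(\Pi, \ad(\bar\rho))$ (deforming $\bar\rho$ by $1+\epsilon c(\cdot)$ for a continuous $1$-cocycle $c$), combined with the universal property of $\cl R_\Lambda$ applied to $\kappa[\epsilon] \in \cl C_\Lambda$. Hence the relative cotangent space has dimension $d_1$. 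Then I would lift a $\kappa$-basis to elements $\tilde x_1, \ldots, \tilde x_{d_1}\in \fr m_{\cl R_\Lambda}$; sending $x_i \mapsto \tilde x_i$ defines a continuous local $\Lambda$-algebra homomorphism $\phi: R \to \cl R_\Lambda$, which is surjective by the complete form of Nakayama's lemma.

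The decisive step is to construct an inverse using the vanishing of $d_2$. Inductively I would build compatible deformations $\boldsymbol\rho_k: \Pi \to \GL_n(R/\fr n^k)$, where $\fr n$ denotes the maximal ideal of $R$, starting from $\boldsymbol\rho_1 = \bar\rho$. Given $\boldsymbol\rho_k$, I consider the small extension
\[ 0 \longrightarrow \fr n^k/\fr n^{k+1} \longrightarrow R/\fr n^{k+1} \longrightarrow R/\fr n^k \longrightarrow 0; \]
classical obstruction theory attaches to $\boldsymbol\rho_k$ a class in $H^2(\Pi, \ad(\bar\rho)) \otimes_\kappa (\fr n^k/\fr n^{k+1})$ (obtained by picking a continuous set-theoretic lift of $\boldsymbol\rho_k$ and computing the resulting $2$-cocycle) that vanishes precisely when a lift $\boldsymbol\rho_{k+1}$ exists. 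Since $H^2(\Pi, \ad(\bar\rho)) = 0$, such a lift is always available. At the first nontrivial step I would moreover choose $\boldsymbol\rho_2$ so that, via the Step 1 identification, its classifying map $\cl R_\Lambda \to R/\fr n^2$ agrees with the reduction of $\phi$ modulo $\fr n^2$, and I would preserve this choice through the induction. Passing to the inverse limit produces a continuous deformation $\boldsymbol\rho_\infty: \Pi \to \GL_n(R)$, classified by universality by a continuous local $\Lambda$-algebra homomorphism $\psi: \cl R_\Lambda \to R$.

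The compatibility arranged above ensures that $\phi\circ\psi$ and $\psi\circ\phi$ both induce the identity on the relative cotangent spaces; as local homomorphisms of complete noetherian local rings inducing the identity modulo the square of the maximal ideal, each is surjective by the complete Nakayama lemma, and combined with the surjectivity of $\phi$ they must be mutually inverse isomorphisms. The main obstacle is Step 3: namely, rigorously constructing the obstruction class in $H^2$, verifying continuity of the inverse limit $\boldsymbol\rho_\infty$, and—most subtly—arranging the compatibility among the lifts so that $\psi$ genuinely inverts $\phi$ rather than merely defining some unrelated section. Once this is done, $d_2=0$ supplies the lifts for free and the isomorphism $\cl R_\Lambda \simeq \Lambda[\![x_1,\ldots,x_{d_1}]\!]$ drops out.
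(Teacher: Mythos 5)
The first thing to note is that the paper does not prove this statement at all: it is quoted from Mazur and Gouv\^ea, the proof being referred to \cite[Proposition 2]{mazur} and \cite[Theorem 4.2]{gouvea}. So your proposal has to be measured against the standard argument in those references, and it takes a genuinely different (and longer) route. The cited argument runs as follows: one uses the tangent-space computation (your Step 1) to get a minimal presentation $0\to J\to R\to\cl R_\Lambda\to 0$ with $R=\Lambda[\![x_1,\dots,x_{d_1}]\!]$, then considers the \emph{single} small extension $R/\fr nJ\to R/J=\cl R_\Lambda$ and the obstruction to lifting the \emph{universal} deformation through it; minimality of the presentation makes the resulting map $\Hom_\kappa(J/\fr nJ,\kappa)\to H^2\bigl(\Pi,\ad(\bar\rho)\bigr)$ injective, so $J$ needs at most $d_2$ generators, and $d_2=0$ gives $J/\fr nJ=0$, hence $J=0$ by the complete Nakayama lemma. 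No inverse map $\psi$ and no infinite induction are needed; the compatibility problem you worry about never arises.

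Your alternative route (build a deformation over all of $R$ by successive liftings, classify it by $\psi$, and show $\psi$ inverts $\phi$) can be completed, but the step you flag is exactly where the proposal is incomplete, and as written it does not typecheck: $\phi$ maps $R$ to $\cl R_\Lambda$, so there is no ``reduction of $\phi$ modulo $\fr n^2$'' with source $\cl R_\Lambda$ for the classifying map of $\boldsymbol\rho_2$ to agree with. Nor can you demand agreement with $\phi$ on the nose at level $2$: that presupposes $\ker\phi\subset\fr n^2$, i.e.\ injectivity of the full cotangent map, which is not yet known (a priori it could fail in the $\fr m_\Lambda$-direction). What one can arrange is weaker and suffices: lifts of $\bar\rho$ to $R/\fr n^2$ exist since $d_2=0$ and form a torsor under $H^1\bigl(\Pi,\ad(\bar\rho)\bigr)\otimes_\kappa\fr n/\fr n^2$; pushing forward along $R/\fr n^2\to\kappa\oplus W$ with $W:=\fr n/(\fr n^2+\fr m_\Lambda R)$, this action covers $H^1\otimes_\kappa W$, so one may twist the lift until its classifying map $\psi_2:\cl R_\Lambda\to R/\fr n^2$ induces on \emph{reduced} cotangent spaces the inverse of the isomorphism induced by $\phi$. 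All higher lifts automatically preserve this (they reduce to $\psi_2$), continuity of $\boldsymbol\rho_\infty$ is harmless because $\kappa$ is finite so each $R/\fr n^k$ is finite, and then $\psi\circ\phi$ and $\phi\circ\psi$ induce the identity on reduced cotangent spaces, hence are surjective by complete Nakayama; since a surjective endomorphism of a noetherian ring is injective, $\phi$ is an isomorphism. With these repairs your argument is correct, but the single-obstruction-class argument of the cited sources buys you the result without ever constructing $\psi$.
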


When $d_2=0$ we say that the deformation problem for $\bar\rho$ is \emph{unobstructed}. It is convenient to introduce the following terminology.

\begin{defi} \label{irreducible-def-defi}
Suppose that the coefficient ring $A$ is a domain. A deformation $\boldsymbol\rho$ of $\bar\rho$ to $A$ is said to be \emph{irreducible} if every $\rho\in\boldsymbol\rho$ is irreducible.
\end{defi}

Here the irreducibility of a representation over $A$ is understood in the sense of \S \ref{terminology-subsection}. Of course, since all representations in a deformation of $\bar\rho$ to $A$ are equivalent, Definition \ref{irreducible-def-defi} amounts to requiring that there exists $\rho\in\boldsymbol\rho$ which is irreducible.

Let now $\cl O$ be a complete regular local ring with residue field $\kappa$. As an easy consequence of the results in \S \ref{regular-subsec}, we obtain 
 
\begin{prop} \label{irreducible-def-prop}
With notation as before, let
\[ \bar\rho:\Pi\longrightarrow\GL_n(\kappa) \]
be an $n$-dimensional irreducible residual representation such that $C(\bar\rho)=\kappa$ and $d_2=0$. The universal deformation
\[ \boldsymbol\rho:\Pi\longrightarrow\GL_n(\cl R_{\cl O}) \]
of $\bar\rho$ is irreducible.
\end{prop}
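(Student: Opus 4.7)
The plan is to combine Theorem \ref{mazur-thm} with Theorem \ref{thm2}, exploiting the fact recalled at the end of \S \ref{regular-subsec} that a formal power series ring over a regular local ring is again regular local. Since $\cl O$ is a complete regular local ring with residue field $\kappa$ and the hypotheses of Theorem \ref{mazur-thm} are satisfied ($C(\bar\rho)=\kappa$ and $d_2=0$), I would begin by writing the explicit presentation
\[ \cl R_{\cl O}\simeq\cl O[\![x_1,\dots,x_{d_1}]\!], \]
which shows at once that $\cl R_{\cl O}$ is a regular local ring. Denote its maximal ideal by $\fr M$; by construction its residue field is $\kappa$ and the induced reduction map on $\GL_n$ sends any representative of $\boldsymbol\rho$ to a representation strictly equivalent to $\bar\rho$.

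Next, I would pick any representative $\rho:\Pi\to\GL_n(\cl R_{\cl O})$ of $\boldsymbol\rho$ and view it as the representation attached to the free $\cl R_{\cl O}$-lattice $L:=\cl R_{\cl O}^n$ sitting inside $V:=K^n$, where $K$ is the fraction field of $\cl R_{\cl O}$. With this identification, $\bar\rho_{L,\fr M}$ is conjugate in $\GL_n(\kappa)$ to $\bar\rho$, so it inherits the irreducibility of $\bar\rho$. All the hypotheses of Theorem \ref{thm2} are now in force, and a direct application of that theorem yields that $\rho_V$ is irreducible. By Proposition \ref{prop1} this is equivalent to the irreducibility of $\rho$ as a representation over the domain $\cl R_{\cl O}$. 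Since strict equivalence is implemented by conjugation inside $\GL_n(\cl R_{\cl O})$ and conjugation preserves irreducibility, the conclusion is independent of the chosen representative, so $\boldsymbol\rho$ is irreducible in the sense of Definition \ref{irreducible-def-defi}.

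The main obstacle is essentially only bookkeeping: one must verify that the universal deformation ring furnished by Theorem \ref{mazur-thm} is indeed a regular local ring, so that the irreducibility criterion of \S \ref{regular-subsec} applies. Once this is observed, the proposition reduces to a direct invocation of Theorem \ref{thm2} (together with Proposition \ref{prop1} to pass between lattice and generic fiber), and no genuinely new input beyond the results already established in the paper is needed.
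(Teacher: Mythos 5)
Your proposal is correct and follows essentially the same route as the paper: invoke Theorem \ref{mazur-thm} to identify $\cl R_{\cl O}$ with $\cl O[\![x_1,\dots,x_{d_1}]\!]$, observe that it is therefore a regular local ring, note that any representative of $\boldsymbol\rho$ reduces modulo the maximal ideal to the irreducible $\bar\rho$, and conclude by Theorem \ref{thm2}. The only (harmless) imprecision is that by the very definition of a deformation each representative reduces \emph{exactly} to $\bar\rho$, not merely to a conjugate of it; otherwise your extra bookkeeping (the lattice $L=\cl R_{\cl O}^n$, Proposition \ref{prop1}, independence of the representative) just makes explicit what the paper leaves implicit.
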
 

\begin{proof} Since we are in the unobstructed case, Theorem \ref{mazur-thm} ensures that there is an $\cl O$-algebra isomorphism $\cl R_{\cl O}\simeq\cl O[\![x_1,\dots,x_{d_1}]\!]$ with $d_1$ as in \eqref{d-eq}. In particular, the deformation ring $\cl R_{\cl O}$ is a regular local ring, whose maximal ideal we denote by $\fr m$. Moreover, every representation $\rho\in\boldsymbol\rho$ reduces modulo $\fr m$ to $\bar\rho$, which is irreducible by assumption, hence the proposition follows from Theorem \ref{thm2}. \end{proof}   

\begin{rem}
The reader may have noticed that Proposition \ref{irreducible-def-prop} has not been stated in the greatest generality. In fact, all we really need to know in order to deduce the irreducibility of $\boldsymbol\rho$ from Theorem \ref{thm2} is that $\bar\rho$ is irreducible and admits a universal deformation ring which is a regular local ring. However, the conditions appearing in Proposition \ref{irreducible-def-prop} are the ones that can be checked more easily in ``practical'' situations, so we preferred to formulate our results in a slightly less general but more readily applicable form.
\end{rem} 

\subsection{Deformations of residual modular representations} \label{modular-subsec}

Of remarkable arithmetic interest are the residual Galois representations associated with modular forms, and now we want to specialize Proposition \ref{irreducible-def-prop} to this setting. 

Let $f$ be a newform of level $N$ and weight $k\geq2$ and let $K_f$ be the number field generated by the Fourier coefficients of $f$. For every prime $\lambda$ of $K_f$ Deligne has associated with $f$ a semisimple representation
\[ \bar\rho_{f,\lambda}:G_{\Q,S}\longrightarrow\GL_2(\kappa_\lambda) \]
over the residue field $\kappa_\lambda$ of $K_f$ at $\lambda$; here $G_{\Q,S}$ is the Galois group over $\Q$ of the maximal extension of $\Q$ unramified outside the finite set $S$ of places dividing $N\ell\infty$ where $\ell$ is the characteristic of $\kappa_\lambda$ and $\infty$ denotes the unique archimedean place of $\Q$. The representation $\bar\rho_{f,\lambda}$ is absolutely irreducible for all but finitely many primes $\lambda$; for such a $\lambda$ let $\cl R_{f,\lambda}^S$ be the universal deformation ring parametrizing deformations of $\bar\rho_{f,\lambda}$ to complete noetherian local rings with residue field $\kappa_\lambda$ (cf. Remark \ref{absolutely-irr-rem}). 

As a sample result in the context of residual modular representations, we prove 

\begin{prop} \label{modular-prop}
If $k>2$ then the universal deformation
\[ \boldsymbol\rho_{f,\lambda}:G_{\Q,S}\longrightarrow\GL_2\bigl(\cl R_{f,\lambda}^S\bigr) \]
of $\bar\rho_{f,\lambda}$ is irreducible for all but finitely many primes $\lambda$ of $K_f$, while if $k=2$ then $\boldsymbol\rho_{f,\lambda}$ is irreducible for a subset of primes $\lambda$ of $K_f$ of density $1$.
\end{prop}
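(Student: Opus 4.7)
The plan is to invoke Proposition \ref{irreducible-def-prop} with $\Pi=G_{\Q,S}$, $\kappa=\kappa_\lambda$, $\cl O=W(\kappa_\lambda)$, and $\bar\rho=\bar\rho_{f,\lambda}$. The base ring $W(\kappa_\lambda)$ is a complete discrete valuation ring, hence a complete regular local ring with residue field $\kappa_\lambda$, so it satisfies the hypothesis on $\cl O$. What remains is to check, for a suitable set of primes $\lambda$ of $K_f$, the three conditions on $\bar\rho_{f,\lambda}$: irreducibility, the equality $C(\bar\rho_{f,\lambda})=\kappa_\lambda$, and the vanishing of $d_2=\dim_{\kappa_\lambda}H^2(G_{\Q,S},\ad(\bar\rho_{f,\lambda}))$.

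The first two conditions are handled uniformly. As recalled in the excerpt, Deligne's representation $\bar\rho_{f,\lambda}$ is absolutely irreducible for all but finitely many $\lambda$; by Remark \ref{absolutely-irr-rem} this immediately gives both the irreducibility of $\bar\rho_{f,\lambda}$ and the identity $C(\bar\rho_{f,\lambda})=\kappa_\lambda$ via Schur's lemma. So I would first fix the finite set of ``bad'' primes where absolute irreducibility could fail and exclude them from the discussion; the remaining primes form a cofinite set of $K_f$, which is in particular of density $1$.

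The core input is the unobstructedness condition $d_2=0$, and this is exactly what Weston proves in \cite{weston}. More precisely, I would quote Weston's main theorems to the effect that $H^2(G_{\Q,S},\ad(\bar\rho_{f,\lambda}))=0$ for all but finitely many primes $\lambda$ of $K_f$ when $k>2$, and for a set of primes $\lambda$ of $K_f$ of density $1$ when $k=2$. Intersecting this set of ``unobstructed'' primes with the cofinite set above does not change the asymptotic size: in the weight $k>2$ case one still has cofinitely many primes, and in the weight $k=2$ case one still has density $1$ (since a density $1$ set minus a finite set has density $1$). For each $\lambda$ in this intersection, Theorem \ref{mazur-thm} gives $\cl R_{f,\lambda}^S\simeq W(\kappa_\lambda)[\![x_1,\dots,x_{d_1}]\!]$, which is a regular local ring, and then Proposition \ref{irreducible-def-prop} applies and yields the irreducibility of $\boldsymbol\rho_{f,\lambda}$.

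The main obstacle is essentially bookkeeping rather than mathematical: one has to pin down the precise statements of Weston's results in the form ``$d_2=0$ for cofinitely many / density $1$ primes of $K_f$'' and verify that his setup (in particular the set $S$ of ramification and the choice of coefficient field) matches the one used in the statement of Proposition \ref{modular-prop}. Once this dictionary is set up, the deduction is formal: everything reduces to an application of Proposition \ref{irreducible-def-prop}, which in turn rests on Theorem \ref{thm2}.
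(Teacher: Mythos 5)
Your proposal is correct and follows essentially the same route as the paper: invoke Weston's theorem to get unobstructedness (cofinitely many primes for $k>2$, density $1$ for $k=2$), note that $\cl R_{f,\lambda}^S$ is then a power series ring over $W(\kappa_\lambda)$, hence regular local, and conclude from the (absolute) irreducibility of $\bar\rho_{f,\lambda}$ via Theorem \ref{thm2}. The paper cites Theorem \ref{thm2} directly while you route through Proposition \ref{irreducible-def-prop}, and you spell out the bookkeeping of intersecting with the finite set of bad primes, but these are only presentational differences.
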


\begin{proof} By a theorem of Weston (\cite[Theorem 1]{weston}), if $k>2$ (respectively, $k=2$) then the deformation problem for $\bar\rho_{f,\lambda}$ is unobstructed for almost all primes $\lambda$ of $K_f$ (respectively, for a subset of primes $\lambda$ of $K_f$ of density $1$). Moreover, for every such $\lambda$ there is an isomorphism
\[ \cl R_{f,\lambda}^S\simeq W(\kappa_\lambda)[\![x_1,x_2,x_3]\!] \]
where $W(\kappa_\lambda)$ is the ring of Witt vectors of $\kappa_\lambda$. Since $\bar\rho_{f,\lambda}$ is (absolutely) irreducible, the proposition follows from Theorem \ref{thm2}. \end{proof}

See \cite{weston2} for explicit results on the set of obstructed primes for $f$ in the case where the level $N$ is square-free.

\end{document}